\def\A{\mathbb{A}}
\newcommand{\reals}{{\mathbb R}}
\newcommand{\ints}{{\mathbb Z}}
\DeclareMathOperator{\Ff}{Fact}
\DeclareMathOperator{\Pre}{Pref}
\newcommand{\R}{\mathcal{R}}
\newcommand{\D}{\mathcal{D}}
\newcommand{\cP}{\mathcal{P}}
\newcommand{\cPret}{\mathcal{P}_{\rm{ret}}}
\newcommand{\FP}{\mathcal{FP}^{\prime}}
\newtheorem{thm}{Theorem}
\newtheorem{conjecture}{Conjecture}
\newtheorem{rem}[thm]{Remark}
\newtheorem{theorem}{Theorem}[section]
\newtheorem{lemma}[theorem]{Lemma}
\newtheorem{corollary}[theorem]{Corollary}
\newtheorem{proposition}[theorem]{Proposition}
\theoremstyle{definition}
\begin{document}

\begin{frontmatter}

\title{On a question of Hof, Knill and Simon on palindromic substitutive systems}

\author[label1]{Tero Harju}
  \ead{harju@utu.fi}

  \author[label1]{Jetro Vesti}
  \ead{jejove@utu.fi}

   \author[label1,label2]{Luca Q. Zamboni\fnref{label3}}
  \ead{lupastis@gmail.com}

  \fntext[label3]{Partially supported by a FiDiPro grant (137991)
  from the Academy of Finland and by
ANR grant {\sl SUBTILE}.}

\address[label1]{FUNDIM, University of Turku, Finland}

\address[label2]{Universit\'e de Lyon,
Universit\'e Lyon 1, CNRS UMR 5208, Institut Camille Jordan, 43 boulevard du 11 novembre
1918, F69622 Villeurbanne Cedex, France}

\begin{abstract}  In a 1995 paper,  Hof, Knill and Simon obtain a sufficient combinatorial
criterion on the hull $\Omega$ of the potential of a discrete Schr\"odinger operator which
guarantees purely singular continuous spectrum on a generic subset of $\Omega.$
In part, this condition requires the existence of infinitely many palindromic factors.
In this same paper, they introduce the class P of morphisms $f:A^*\rightarrow B^*$ of
the form $a\mapsto pq_a$ and ask whether every palindromic subshift generated by a
primitive substitution arises from morphisms of class P or by morphisms of the form
$a\mapsto q_ap$ where again $p$ and $q_a$ are palindromes.
 In this paper we give a partial affirmative answer to the question of
Hof, Knill and Simon: we show that every rich primitive substitutive subshift  is generated
by at most two morphisms  each of which is conjugate to a morphism of class P. More
precisely, we show that every rich (or almost rich in the sense of finite defect) primitive
morphic word $y\in B^\omega$ is of the form $y=f(x)$ where $f:A^*\rightarrow B^*$ is
conjugate to a morphism of class P, and where $x$ is a rich word fixed by a primitive
substitution $g:A^*\rightarrow A^*$ conjugate to one in class P.
\end{abstract}

\begin{keyword} Discrete one-dimensional Schr\"odinger operators,   class P conjecture,
primitive morphic words, rich words.
\MSC 37B10
\end{keyword}

\end{frontmatter}

\section{Introduction}
Let $A$ be a finite non-empty set. Associated to each uniformly recurrent word $w \in
A^\omega$, is the subshift $\Omega=\Omega(w)$ of all two-sided infinite words having the
same factors as $w.$ In many interesting cases (including when $w$  is a Sturmian word,
or generated by a primitive substitution),  $\Omega(w)$ is a $1$-dimensional {\it
quasicrystal} modeled by a family of aperiodic words which are locally indistinguishable.
To each $x\in \Omega,$ one associates a discrete one-dimensional Schr\"odinger operator
$H_x$ which acts in the Hilbert space ${\mathcal{H}} = \ell^2(\ints)$. If $\phi \in
{\mathcal{H}}$, then $H_x \phi$ is given by
$$
(H_x\phi)(n) = \phi(n+1) + \phi(n-1) + f(x_n) \phi(n),
$$
where the potential $f: A\rightarrow \reals$ is any injective mapping. In case $f$ is
bounded,  $H_x$ becomes  a bounded self-adjoint operator. Given an initial state $\phi \in
\mathcal{H}$, the Schr\"odinger time evolution is given by $\phi(t) = \exp (-itH_x) \phi$,
where $\exp (-itH_x)$ is given by the spectral theorem. An important question in
connection with  the conductivity of the given structure is  whether $\phi(t)$  spreads out
in space, and if so, how fast. In this context, it is natural to consider the spectral measure
$\mu_\phi$ associated with $\phi$ defined by
$$
\langle \phi , (H_x - z)^{-1} \phi \rangle = \int_\R \frac{d\mu_\phi(x)}{x - z}
\mbox{ for every $z$ with } {\rm Im} \, z > 0.
$$
Roughly speaking, the more continuous $\mu_\phi$, the faster the spreading of $\phi(t)$;
compare, for example, \cite{bgt1,g3,l2}.

In physical terms, the spectral properties of $H_x$ determine the "conductivity properties"
of the given structure. Roughly, if the spectrum is absolutely continuous, then the structure
behaves like a conductor, while in the case of pure point spectrum, it behaves like an
insulator. An intermediate spectral type, known as {\it singular continuous spectrum}, is
expected to give rise to intermediate transport properties. For periodic structures, singular
continuous spectra does not occur. However, for one-dimensional quasicrystals, this
spectral type is experimentally seen to be rather common. In\cite{HKS}, Hof, Knill and
Simon give a sufficient combinatorial criterion for purely singular continuous spectrum in
terms of a strong palindromicity property of the underlying word. More precisely,  a word
$x\in A^\ints$ is said to be {\it strongly palindromic} if there exist $B>0$ and a sequence
$(u_i)_{i\geq 1}$ of palindromic factors of $x$ centered at $m_i\longrightarrow +\infty$
such that $e^{Bm_i}/|u_i| \longrightarrow 0.$  They then show that if $x\in A^\ints$ is
aperiodic and {\it palindromic}, meaning that $x$ contains infinitely many distinct
palindromes, then its subshift contains uncountably many strongly palindromic words (see
Proposition 2.1 in \cite{HKS}). It follows from a result of Jitomirskaya and Simon in
\cite{JS} that if $x$ is strongly palindromic, then the spectrum of $H_x$ is empty. They then
deduce that  if $\Omega$ is uniquely ergodic and generated by an aperiodic palindromic
word $w,$  then the operator $H_x$ has purely singular continuous spectrum for
uncountably many $x\in \Omega .$ In this same paper they introduce the class  $\cP$ of
(non-erasing) morphisms $f:A^*\rightarrow B^*$ of the form $a\mapsto pq_a$ where
$p,q_a$ are each palindromes. Morphisms in this class are said to be of class P. Actually in
\cite{HKS} they consider only primitive substitutions in $\cP.$ As was observed in
\cite{HKS}, substitutions in $\cP$ generate palindromic subshifts. They also point out that
substitutions of the form  $a\mapsto q_ap$ with $p$ and $q_a$ palindromes also generate
palindromic subshifts. Thus they extend $\cP$ to include also substitutions of the form
$f(a)=q_ap$ and remark:

\begin{rem}[Hof, Knill, Simon, Remark 3 in \cite{HKS}]\label{quest}
We do not know whether all palindromic subshifts generated by primitive substitutions
arise from substitutions in this extended class.
\end{rem}

Over the years this remark has evolved into what is now called the class P conjecture. The
first step in the evolution process, which is perhaps non consequential,  was to convert this
remark into a question. The second step, which in our minds represents a significant
alteration, was to replace the entire subshift by a single element within the subshift which
is fixed by a primitive substitution. The third  was to give a precise interpretation to ``arise
from" as meaning ``fixed by". The fourth and final step was to call it a conjecture.

If we agree to focus on a single element of the subshift, then it is natural to widen the class
of possible morphisms. In fact, if $x\in A^\omega$ is generated by a primitive substitution
$f:A\rightarrow A^+$ and each of the images $f(a)$ for $a\in A$ begins or ends in a
common letter, then one may conjugate each of the images by this common letter to obtain
a new primitive substitution which generates the same subshift as $f.$ Hence if $f$ is in
class P, then this new substitution need no longer be in class $\cP$ although it,  or some
power of it, will have a palindromic fixed point.  For instance,  Blondin Mass\'e proved that
the fixed point $x$ of the primitive substitution $a\mapsto abbab, b\mapsto abb$ is
palindromic, but that $x$ itself is not fixed by a primitive substitution in $\cP$  (see
Proposition 3.5 in \cite{Lab1}).  However  this morphism is conjugate to the class P
morphism $a\mapsto bbaba, b\mapsto bba.$ Thus it is reasonable to consider the class
$\cP'$ of all morphisms $f$ which are conjugate to some morphism in $\cP.$ Let $\FP$
denote the set of all infinite words which are fixed by some primitive substitution in class
$\cP'.$ Then the original remark of Hof, Knill and Simon was reformulated in terms of the
following conjecture, called the class P conjecture:

\begin{conjecture}[Blondin Mass\'e, Labb\'e in \cite{Lab1}]
If $x$ is a palindromic word fixed by a primitive substitution, then $x\in \FP.$
\end{conjecture}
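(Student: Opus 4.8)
The plan is to exploit the reversal symmetry forced by palindromicity and to show that a power of the given substitution is itself conjugate to a class $\cP$ morphism. First I would record the reduction: since $x$ is fixed by a primitive substitution it is uniformly recurrent, so every factor occurs in every sufficiently long factor; as $x$ is palindromic, arbitrarily long palindromes occur, each such palindrome $P$ contains any prescribed factor $u$, and $\widetilde{P}=P$ then contains $\widetilde u$. Hence $\Ff(x)$ is closed under reversal. Next, writing $h$ for the primitive substitution with $h(x)=x$, I note that $h^{n}(x)=x$ and $h^{n}$ is primitive for every $n\geq 1$, so it suffices to exhibit \emph{one} power $h^{n}$ lying in class $\cP'$: that alone places $x$ in $\FP$. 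Thus the entire problem becomes: after passing to a suitable power, find a conjugate $g$ of $h^{n}$ of the form $g(a)=p\,q_{a}$ with $p$ and all $q_{a}$ palindromes.

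The second step is to choose the correct conjugate by recentering on a palindrome. The conjugates of $h^{n}$ are obtained by sliding a word $w$ across the images via $h^{n}(a)\,w=w\,g(a)$, and each such $g$ is primitive with a fixed point lying in $\Omega(x)$. Using reversal closure together with uniform recurrence, I would select $w$ so that the fixed point $y$ of $g$ carries infinitely many palindromic \emph{prefixes}: palindromic factors of $x$ with centers tending to $+\infty$ recur near a fixed position, and reversal closure allows one to slide a center to the left end of an element of $\Omega(x)$. The point of this choice is that a long palindromic prefix $P$ of $y$ straddles the seam of the self-similar decomposition $y=g(y)=g(y_{0})g(y_{1})\cdots$, and its central symmetry pins down a common palindromic block shared by all the images $g(a)$; this block is the candidate for the common prefix $p$.

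The heart of the argument is then to prove the structural statement that a primitive $g$ fixing a word $y$ with infinitely many palindromic prefixes and reversal-closed language satisfies $g(a)=p\,q_{a}$ with $p$ a fixed palindrome and each $q_{a}$ a palindrome. Having located $p$ as above, I would analyse the complementary tails $q_{a}$ by means of the complete return words to the palindromic prefixes $P$ of $y$, and by comparing $g$ with the mirror substitution $g^{R}$ defined by $g^{R}(a)=\widetilde{g(a)}$, which by reversal closure also generates $\Omega(x)$. The interplay of $g$ and $g^{R}$ on the shared subshift should force each tail, once the common $p$ is stripped, to coincide with its own reversal, which is exactly the class $\cP$ factorization.

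The step I expect to be genuinely hard is precisely this last one, and it is where the merely-palindromic hypothesis strains the method. For \emph{rich} $y$ the tails are palindromes for free, since richness is equivalent to every complete return word to a palindrome being itself a palindrome; this is the shortcut I expect the restricted theorem to exploit, and it immediately yields the $q_{a}$. For general palindromic $y$ that tool is unavailable, and the difficulty is to deduce from reversal symmetry \emph{alone} both that the common prefix $p$ may be chosen palindromic and that all tails $q_{a}$ are palindromes \emph{simultaneously}. I would attack this by passing to higher powers of $g$ and to the recoding of $y$ by the return words to $P$, where the palindromic structure is more rigid, and by tracking letter by letter the conjugating word relating $g$ to $g^{R}$. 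This is the crux of the proof, and it is the exact location at which a counterexample to the unrestricted conjecture would have to appear, namely where mirror symmetry fails to force a single common palindromic prefix.
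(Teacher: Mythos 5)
There is a fundamental problem before any detail of your argument can be assessed: the statement you are trying to prove is not a theorem of the paper at all. It is stated as a conjecture, and the paper explicitly records that it is \emph{false}: Labb\'e \cite{Lab2} produced a counterexample on a ternary alphabet, namely the fixed point $x=acabacacabac\cdots$ of the primitive substitution $a\mapsto ac$, $b\mapsto acab$, $c\mapsto ab$, which is palindromic (indeed rich) yet does not lie in $\FP$ --- that is, no primitive substitution in $\cP'$ fixes $x$, so in particular no power of the given substitution, and no conjugate recentering of any power, can be brought into class $\cP$. So no proof proposal can close the gap you yourself flagged as the crux: your final step, deducing from reversal closure of $\Ff^+(x)$ that a conjugate $g$ of some power satisfies $g(a)=pq_a$ with $p$ and all $q_a$ palindromes simultaneously, is precisely the step that Labb\'e's example shows cannot be carried out. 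You located the obstruction accurately --- your own remark that this is ``the exact location at which a counterexample to the unrestricted conjecture would have to appear'' is correct, and indeed that is where it appears --- but you drew the wrong conclusion by proposing to push through it via higher powers and return-word recodings. Passing to powers does work on binary alphabets (Tan \cite{Tan} shows $f^2\in\cP'$ there), but it fails already on three letters.

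Note also that your proposed shortcut for the rich case does not rescue the conjecture either: Labb\'e's counterexample is rich, so richness of $x$ does not place $x$ itself in $\FP$, contrary to your expectation that the complete-return characterization of richness (Theorem~\ref{EJC}) ``immediately yields the $q_a$'' for the original substitution. What the paper actually proves (Theorem~\ref{rich}) is a structurally different positive statement: for a rich primitive morphic word $y$, one passes to the derived word $\D_0(y)$ with respect to the prefix $0$ --- it is here, on the \emph{derived} word, that richness forces every first return to $0$ to have the form $0v_a$ with $v_a$ a palindrome, so the decoding morphism is in $\cP$ --- and then iterates derivation until, by Durand's finiteness theorem (Theorem~\ref{durand}), some derived word recurs and is fixed by a composition of $\cP'$ morphisms, which lies in $\cP'$ by closure under composition (Proposition~\ref{comp}). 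The conclusion is $y=g(x)$ with $g\in\cP'$ and $x\in\FP$, i.e.\ two morphisms in $\cP'$ rather than membership of $y$ itself in $\FP$. If you want a salvageable project, redirect your mirror-substitution machinery at this weaker factored statement, not at the conjecture as stated.
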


Partial results in support of the  conjecture were obtained by Allouche et al. in case $x$ is
periodic (see \cite{ABCD}) and by Tan in case $x$ is a binary word (see \cite {Tan}). Tan
proves that if $x$ is a palindromic binary word fixed by a primitive substitution $f,$  then
$f^2\in \cP'.$

Recently Labb\'e \cite{Lab2} produced a counter-example to the class P conjecture on a
ternary alphabet. The counter-example is given by the fixed point
\[x=acabacacabacabacabacacabac\cdots\] of the primitive substitution: \[f: \,\,\,\,
a\mapsto ac,\,\,\,\, b\mapsto acab,\,\,\,\,  c\mapsto ab.\]  He proves that $x$ is
palindromic but not in $ \FP $ (see \cite{Lab2}). But let us remark that Labb\'e's
counter-example to the class P conjecture does not constitute a negative answer to the
original question (or remark) of Hof, Knill and Simon. In fact, it is readily verified that the
second shift
\[T^2(x)=abacacabacabacabacacabacabacacab\cdots\]
is the fixed point of the class P morphism:
\[g: \,\,\,\, a\mapsto ab,\,\,\,\, b\mapsto acac,\,\,\,\,  c\mapsto ac.\]
So the subshift generated by $x$ is in fact generated by a morphism in class P. The
morphism $g$ is closely related to the Toeplitz period-doubling word.

What is surprising is that Labb\'e's counter-example to the class P conjecture is not only
palindromic, but is as rich as possible in palindromes. More precisely, Droubay, Justin and
Pirillo observed that any finite word $u$ has at most $|u|+1$ distinct palindromic factors
(including the empty word). Accordingly, an infinite word $x$ is called rich if each factor
$u$ of $x$ has $|u|+1$ many distinct palindromic factors. It turns out that Labb\'e's
counter-example to the class P conjecture is a rich word. To see this,  we observe that $x$
is obtained from the fixed point $y$ of the morphism \[\tau: \,\,\,\,  b\mapsto ccb,\,\,\,\,
c\mapsto cb\] by inserting the symbol $a$ before every occurrence of each of the symbols
$b$ and $c.$ It is readily verified that  $\tau=\tau_3\tau_2\tau_1$ where  $\tau_1:
b\mapsto c, c\mapsto b$, $\tau_2: b\mapsto b, c\mapsto cb$ and $\tau_3: b\mapsto cb,
c\mapsto c,$ and hence by Corollary 6.3 and Proposition 6.6 in \cite{gjwz}  it follows that
$y$ is rich. Given that $y$ is rich, it now follows from Corollary 6.3  in \cite{gjwz} that $x$
is rich.

We regard the class P conjecture as an attempt to explain how a fixed point of a primitive
substitution can contain infinitely many palindromes. Of course, a typical substitution does
not preserve palindromes, hence one would expect that a palindrome generating
substitution would have some particular inherent structure. In this paper, we give such an
explanation in case $x$ is rich, or close to being rich. More precisely, the \emph{defect} of
a finite word $u,$  defined by $\text{D}(u)=|u|+1-|\textrm{Pal}(u)|,$ is a measure of the
extent to which $u$ fails to be rich. The \emph{defect} of a infinite word $x$ is defined by
$\text{D}(x)=\text{sup}\{\text{D}(u) | u\ \text{is a prefix of}\ x\}.$ This quantity can be finite
or infinite, and an infinite word is rich  if and only if its defect is equal to $0.$ Any infinite
word of finite defect is necessarily palindromic, but not conversely as is evidenced, for
example, by the Thue-Morse word. We show that if $y$ has finite defect and is generated by
a primitive substitution, then there exists a morphism $f\in \cP'$ and a rich word $x\in \FP$
such that $y=f(x).$  Actually, our result is more general as it applies as well to all primitive
morphic words (i.e., morphic images of fixed points of primitive substitutions). More
precisely:

\begin{thm}\label{main} Let $y$ be a primitive morphic word with finite defect.
Then there exists a morphism $f\in \cP'$ and a rich word $x\in \FP$ such that $y=f(x).$
\end{thm}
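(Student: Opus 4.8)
The plan is to read off $x$ and $f$ from the return words of $y$ to a carefully chosen palindrome, the point being that in a word of small defect the \emph{complete} return words to a long palindrome are themselves palindromes. Since $y$ is primitive morphic it is uniformly recurrent, and since $\mathrm{D}(y)<\infty$ it is palindromic, so it contains palindromic factors of unbounded length. I would fix a threshold $N$ depending on $\mathrm{D}(y)$ and choose a palindrome $p$ with $|p|>N$; after replacing $y$ by a shift we may assume $p$ is a prefix of $y$, and it is exactly this alignment that will place $f$ in $\cP'$ rather than in $\cP$. Listing the occurrences of $p$ and letting $v_1,v_2,\dots$ be the associated return words, we have $y=v_1v_2\cdots$. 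Writing $u^{R}$ for the reversal of $u$, the defining property of a complete return word together with $|p|>N$ gives that each $v_jp$ is a palindrome, i.e.\ $v_jp=pv_j^{R}$; when the occurrences of $p$ do not overlap this says $v_j=pz_j$ with $z_j$ a palindrome, which is precisely the class~P shape $a\mapsto pq_a$. Setting $x=i_1i_2\cdots$ to be the derived word recording the types of the $v_j$, and $f(i)=v_i$, then yields $y=f(x)$ with $f$ conjugate to a morphism of class~P.

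Two things remain: that $x$ may be taken rich, and that it lies in $\FP$. For richness I would use that $p$ was chosen \emph{longer} than the defect threshold, so that the finitely many palindromic obstructions of $y$ are all shorter than $p$ and are invisible to the derived word; feeding this into the richness transfer results of \cite{gjwz} (the same Corollary~6.3 used in the Introduction to propagate richness through $\tau$ and through the insertion of $a$) should show that $x$ has defect $0$, hence is rich. For the substitutive structure I would invoke the theory of derived sequences of primitive substitutive words (Durand): the derived sequence of a primitive morphic word with respect to an admissible prefix is again primitive substitutive, so $x$ is the fixed point of some primitive substitution $g$.

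It then remains to upgrade $g$ to an element of $\cP'$, so that $x\in\FP$. Here I would apply the return word construction a second time, now to the rich substitutive word $x$ itself, anchored at a palindromic prefix compatible with the self-similarity $g(x)=x$. The associated return substitution fixes the derived sequence of $x$, while richness of $x$ again forces every complete return word to be a palindrome, so each return word has the class~P shape $p'z'$ with $z'$ a palindrome; the return substitution therefore sends each letter to a concatenation of class~P blocks. Because $x$ is substitutive its derived sequences form a finite set, so iterating this step must cycle and produce a primitive substitution fixing $x$ all of whose images split into such blocks. Exhibiting this substitution as a conjugate of a genuine class~P morphism then gives $x\in\FP$, and with the $f$ of the first paragraph we obtain $y=f(x)$ as required.

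The main obstacle is to control the class~P structure at the level of \emph{morphisms} rather than merely of words. Two points are delicate. First, for a long palindrome the occurrences in $y$ typically overlap (a palindrome of length $\ge 2$ always has a border), so the clean identity $v_j=pz_j$ fails and one must instead track the conjugacy relation $v_jp=pv_j^{R}$ and choose the anchoring so that, after conjugation, even the short return words assemble into a class~P morphism; this is the real content of the passage from $\cP$ to $\cP'$. Second, and harder, is showing that the return substitution produced by the derived sequence construction is itself conjugate to a class~P morphism: a composition of class~P morphisms need not be class~P, so one must either arrange the anchoring palindrome so that the cycle of derived sequences collapses to a single self-similar step, or verify directly that the resulting block structure is palindromic as a substitution. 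By comparison the palindromic calculations --- that complete return words are palindromes and hence split as $pz$ with $z$ a palindrome --- are routine once the correct marker palindrome has been isolated.
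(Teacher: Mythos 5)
Your overall strategy --- pass to a derived word over return words to a palindromic prefix, observe that small defect forces complete returns to be palindromes and hence the return morphism to have class~P shape, then iterate the derived-word construction and use Durand's finiteness theorem to close a cycle and obtain a primitive substitution fixing a rich word $x$ --- is exactly the paper's. But you leave unresolved precisely the two steps that carry the real content, and the alternatives you sketch for them would not work as stated.

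First, the composition problem. Your cycle produces $x$ as a fixed point of a composition $h=g_{m+1}\cdots g_n$ of return morphisms, each individually of class~P shape (up to conjugacy), and you correctly note that $\cP$ is not closed under composition. You then propose either to ``collapse the cycle to a single self-similar step'' (there is no reason the derived words should stabilize after one iteration) or to ``verify directly that the resulting block structure is palindromic as a substitution'' (which just restates the problem). The paper's resolution is a genuinely new ingredient: the class $\cP'$ of morphisms \emph{conjugate} to a class~P morphism \emph{is} closed under composition (Proposition~\ref{comp}, resting on Lemmas~\ref{eqrel}--\ref{comp:lemma}, which in turn need transitivity of conjugacy via Lothaire's Proposition 1.3.4). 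Without this, neither $h$ nor the map $g=g_m\cdots g_1$ carrying $x$ to $y$ is placed in $\cP'$, so you obtain neither $x\in\FP$ nor $f\in\cP'$. Second, the overlap problem you raise is real, but the paper avoids it in the rich case by anchoring at a single-letter palindromic prefix (occurrences of a letter cannot overlap, so each return word is literally $0v'$ with $v'$ a palindrome and the return morphism lies in $\cP$ on the nose), and handles the finite-defect case by quoting Theorem 5.5 of \cite{BPS}, which produces a rich derived word and a morphism in the class $\cPret$, together with their Proposition 5.4 that $\cPret\subset\cP'$; that inclusion is exactly the ``conjugate away the overlap'' step you would otherwise have to prove by hand. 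Finally, your richness claim for the derived word (``the obstructions are invisible to the derived word'') is not an argument; the paper proves it by combining Theorem~\ref{EJC} with item 3 of Remark 5.2 of \cite{BPS} to pull palindromicity of complete returns back through the return morphism.
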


\noindent In this respect, every primitive morphic word $y$ with finite defect is generated
by not one, but two morphisms in $\cP'.$ The first which generates the fixed point $x$ in
Theorem~\ref{main} and the second which maps $x$ to $y.$ Labb\'e's counter-example
shows that even in the case of pure primitive morphic rich words, one cannot hope to have
$y$ itself in $\FP.$  A key ingredient in our proof is Durand's  characterization of primitive
morphic words in terms of the finiteness of the set of  derived words build from first
returns to prefixes (see \cite{Du}). A second involves a result of Balkov\'a et al.  in
\cite{BPS} linking words having finite defect with rich words.

\section{Preliminaries}

Given a finite non-empty set $A,$ we denote by $A^*$ the set of all finite words
$u=u_1u_2\cdots u_n$ with $u_i\in A.$ The quantity $n$ is called the length of $u$ and is
denoted  $|u|.$ The empty word, denoted $\varepsilon,$ is the unique element in $A^*$
with $|\varepsilon|=0.$ We set $A^+=A^*-\{\varepsilon\}.$ For each word $v\in A^+$, let
$|u|_v$  denote the number of occurrences of $v$ in $u$. We denote by $A^\omega$ the
set of all one-sided infinite words $x=x_0x_1x_2\cdots $ with $x_i\in A.$ Given $x\in
A^\omega,$ let $\Ff^+ (x)=\{x_ix_{i+1}\cdots x_{i+j}\,|\, i,j\geq 0\}$ denote the set of all
(non-empty) {\it factors} of $x.$ Recall that $x$ is called {\it recurrent} if each factor $u$
of $x$ occurs an infinite number of times in $x,$ and {\it uniformly recurrent} if for each
factor $u$ of $x$ there exists a positive integer $n$ such that $u$ occurs at least once in
every factor $v$ of $x$ with $|v|\geq n.$ An infinite word $x$ is called {\it periodic} if
$x=u^\omega$ for some $u\in A^+,$ and is called {\it ultimately periodic} if $x=vu^\omega$
for some $v\in A^*,$ and $u\in A^+.$ The word $x$ is called {\it aperiodic} if  $x$ is not
ultimately periodic. Let $x\in A^\omega$ and $u\in \Ff^+(x).$  A factor  $v$ of $x$ is called
a {\it first return} to $u$ in $x$ if $vu\in \Ff^+(x),$ $vu$ begins and ends in $u$ and
$|vu|_u=2.$  If $v$ is a first return to $u$ in $x,$ then $vu$ is called a {\it complete first
return} to $u$  in $x.$ We note that the two occurrences of $u$ in $vu$ may overlap. We
denote by $\R_u(x)$ the set of all first returns to $u$ in $x.$

A function $\tau: A\rightarrow A^+$ is called a {\it substitution}. A substitution $\tau$
extends by concatenation to a morphism from $A^*$ to $A^*$  and to a mapping from
$A^\omega$ to $A^\omega,$ i.e., $\tau(a_1a_2\cdots )=\tau(a_1)\tau(a_2)\cdots.$ By
abuse of notation we denote each of these extensions also by $\tau.$ A substitution  $\tau :
A\to A^+$ is \emph{primitive} if there exists a positive integer $N$ such that
$|\tau^N(a)|_b>0$ for all $ a,b\in A.$ A word $x\in A^\omega$ is a called a \emph{fixed
point} of a substitution $\tau$ if $\tau(x)=x.$ We say $x\in A^\omega$ is {\it pure primitive
morphic} if $x$ is a fixed point of some primitive substitution $\tau: A\rightarrow A^+.$   A
word $y\in B^\omega$ (where $B$ is a finite non-empty set) is called \emph{primitive
morphic} if there exists a  morphism $f:A^*\rightarrow B^*$ and a pure primitive morphic
word $x\in A^\omega$ with $y=f(x).$ It is readily verified that every primitive morphic
word is uniformly recurrent.

 In \cite{Du}, Durand obtains a nice characterization of primitive morphic words in
 terms of so-called derived words.  Let $x\in A^\omega$ be uniformly recurrent.
 Then  $\#\R_u(x)<+\infty$ for each $u\in \Ff^+(x).$
Let $u\in \Pre (x)$ be a non-empty prefix of $x.$ Then $x$ induces a linear order on
$\R_u(x)$ as follows: given distinct $v,v'\in \R_u(x)$ we declare $v<v'$ if the first
occurrence of $v$ in $x$ occurs prior to that of $v'.$ Let $A_u(x)=\{0,1,\ldots
,\#\R_u(x)-1\},$ and let $f_u:A_u(x)\rightarrow \R_u(x)$ denote the unique order preserving
bijection.  We can write $x$ uniquely as a concatenation of first returns to $u,$ i.e.,
$x=u_1u_2u_3\cdots$ with $u_i\in \R_u(x).$  Following \cite{Du} we define the {\it derived
word} of $x$ at $u,$ denoted $\D_u(x),$ as the infinite word with values in $ A_u(x)$ given
by
\[\D_u(x)=f_u^{-1}(u_1)f_u^{-1}(u_2)f_u^{-1}(u_3)\cdots .\]

\noindent For example, let \[x=01101001100101101001011001101001\cdots\] denote the
Thue-Morse word fixed by the substitution $0\mapsto 01, 1\mapsto 10.$ It is readily verified
that $\R_0(x)=\{011,01,0\}.$ So $A_0(x)=\{0,1,2\}$ and $f_0: A_0(x)\rightarrow \R_0(x)$ is
given by $f_0(0)=011, f_0(1)=01,$ and $f_0(2)=0.$ Writing $x$ as a concatenation of first
returns to $0$ we find
\[x=(011)(01)(0)(011)(0)(01)(011)(01)(0)(01)(011)(0)(011)(01)(0)\cdots\] and hence
\[\D_0(x)=012021012102012\cdots.\] It is readily verified that $\D_0(x)$ is the well known
Hall word.

\noindent The following result of Durand gives a  characterization of primitive morphic
words:

\begin{theorem}[F. Durand, Theorem 2.5 in \cite{Du}]\label{durand}
A word $x\in A^\omega$ is primitive morphic if and only if the set $\{\D_u(x)\,|\, u\in \Pre(x)\}$
is finite.
\end{theorem}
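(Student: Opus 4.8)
The plan is to prove both implications, treating the easier converse first. Throughout, observe that a word with finitely many derived words, like a primitive morphic word, is uniformly recurrent, so that $\#\R_u(x)<+\infty$ and each $\D_u(x)$ is well defined; I would use freely two structural facts. First, $x=f_u(\D_u(x))$, where $f_u:A_u(x)\to\R_u(x)$ is the coding of first returns. Second, derivation is transitive: if $p$ is a non-empty prefix of $\D_u(x)$ and $u'=f_u(p)u$ is the corresponding prefix of $x$, then $\D_{u'}(x)=\D_p(\D_u(x))$. This second fact is the main combinatorial engine, and I would isolate it as a lemma, checking that the occurrences of $u'$ in $x$ are in order-preserving bijection with the occurrences of $p$ in $\D_u(x)$.

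For the direction ``finite $\Rightarrow$ primitive morphic'' I would build an increasing chain of prefixes and apply the pigeonhole principle. Starting from $u^{(0)}$ a single letter and setting $w^{(i)}=\D_{u^{(i)}}(x)$, choose at each stage a non-empty prefix $p^{(i)}$ of $w^{(i)}$ and put $u^{(i+1)}=f_{u^{(i)}}(p^{(i)})u^{(i)}$, so that by transitivity $w^{(i+1)}=\D_{p^{(i)}}(w^{(i)})$ and $|u^{(i+1)}|>|u^{(i)}|$. Since every $w^{(i)}$ lies in the finite set $\{\D_u(x)\mid u\in\Pre(x)\}$, there are $m<n$ with $w^{(m)}=w^{(n)}$; composing the transitivity relations yields a non-empty prefix $q$ of $w^{(m)}$ with $\D_q(w^{(m)})=w^{(n)}=w^{(m)}$. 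Thus $w:=w^{(m)}$ is a derived word of itself. Reading off the first returns to $q$ as a substitution $\tau$ on the alphabet of $w$ (with $\tau$ equal to $f_q$ after the canonical identification), this self-derivation says exactly that $\tau(w)=w$; uniform recurrence of $w$ together with the growth of $\tau$ forces $\tau$ to be primitive. Finally $x=f_{u^{(m)}}(w)$ by the first structural fact, exhibiting $x$ as a morphic image of the fixed point $w$ of the primitive substitution $\tau$, hence primitive morphic.

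For the harder direction ``primitive morphic $\Rightarrow$ finite'' I would first reduce to the pure case, writing $x=\pi(y)$ with $\pi$ a coding and $y=\sigma(y)$ the fixed point of a primitive substitution (unfolding the defining morphism into an enlarged primitive substitution). For the pure fixed point $y$ the engine is linear recurrence: a fixed point of a primitive substitution is linearly recurrent, which bounds both the number of first returns, $\#\R_u(y)\le K$, and the length ratios, $c|u|\le|v|\le C|u|$ for every $v\in\R_u(y)$, the lower bound coming from the bounded powers of linearly recurrent words. Since $\sigma(u)$ always begins with $u$, the substitution $\sigma$ induces a substitution $\sigma_u$ on $A_u(y)$ for which $\D_u(y)$ is the fixed point; by the length bounds its alphabet has size $\le K$ and its image lengths are at most $CL/c$, where $L=\max_a|\sigma(a)|$. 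As these bounds are uniform in $u$, only finitely many substitutions $\sigma_u$ occur, hence only finitely many fixed points $\D_u(y)$.

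The remaining obstacle, and the technical heart of this direction, is to transport the finiteness from $y$ to $x=\pi(y)$. Because $\pi$ is only a coding, a factor $u$ of $x$ pulls back to several factors of $y$ and first returns do not transform functorially; I would control this using Moss\'e recognizability of $\sigma$, which makes the decomposition of $y$ into $\sigma$-images readable through a bounded window and thereby lets one renormalize the return-word structure of $x$ across consecutive scales. This should yield, for $x$ itself, induced substitutions with uniformly bounded alphabet (via linear recurrence of $x$) and uniformly bounded image lengths (via the recognizability constant and the expansion factor of $\sigma$), whence finiteness of $\{\D_u(x)\}$ follows exactly as before. I expect establishing these uniform bounds in the presence of the coding, rather than either the pure case or the converse, to be the step demanding the most care.
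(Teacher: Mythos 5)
This statement is imported verbatim from Durand's paper \cite{Du}; the present paper gives no proof of it, so there is no internal argument to compare against --- the relevant benchmark is Durand's original proof, and your plan does follow its architecture. The converse direction (finite set of derived words $\Rightarrow$ primitive morphic) via the transitivity of derivation, the pigeonhole on iterated derived words at prefixes, and the resulting self-derivation $\D_q(w)=w$ read as a substitution $\tau=f_q$ fixing $w$, is exactly how Durand (and, implicitly, the proof of Theorem 3.6 of this paper, which reruns the same induction) proceeds. Two points there are glossed rather than proved: that $\tau$ is primitive is not a one-line consequence of ``uniform recurrence plus growth'' --- you need to show that the minimal length of a return word to the nested prefixes $q^{(N)}$ tends to infinity, which requires separating the (trivial) periodic case from the aperiodic one, where arbitrarily long prefixes with bounded-period return words would force periodicity. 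Similarly, in the pure forward case you need to justify that $\D_u(y)$ is \emph{recovered} from the induced substitution $\sigma_u$ (e.g.\ as $\lim_n\sigma_u^n(0)$ after passing to a power of $\sigma$ so that all letters grow); without that, ``finitely many $\sigma_u$'' does not by itself give ``finitely many fixed points $\D_u(y)$''. Both are repairable with standard arguments.

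The genuine gap is the one you yourself flag: the passage from the pure fixed point $y$ to $x=\pi(y)$ in the forward direction. This step cannot be waved through, because finiteness of the set of derived words is strictly stronger than linear recurrence (linearly recurrent Sturmian words with non-eventually-periodic continued fraction expansions have bounded return-word data at every scale yet infinitely many distinct derived words), so the uniform bounds on $\#\R_u(x)$ and on return-word lengths that you extract from linear recurrence of $x$ are \emph{not} sufficient to conclude; you must actually exhibit, for each prefix $u$ of $x$, an induced substitution of which $\D_u(x)$ is the iterated image, drawn from a finite list, and this is where the substitution structure (via recognizability or Durand's own renormalization of return words across scales) does real work. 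As written, that portion of your argument is a statement of intent (``this should yield\dots''), not a proof, and it is precisely the hardest part of Durand's theorem. So: right skeleton, correct and essentially complete converse, but the forward direction for a genuinely morphic (non-pure) $x$ remains open in your write-up.
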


Given a finite or infinite word $u\in A^*$ we denote by $\textrm{Pal}(u)$ the set of all
palindromic factors of $u$ (including the empty word). Droubay, Justin and Pirillo proved
that any word $u\in A^*$ has at most $|u|+1$ many distinct palindromic factors including
the empty word (see \cite{djp}). A  finite word $u$ is called {\it rich} if  $\#\textrm{Pal}(u)=
|u|+1.$  For instance $aababbab$ is rich while $aababbaa$ is not. An infinite word is
called {\it rich} if all of its factors are rich. Rich words were first introduced by Glen et al.
in \cite{gjwz} and have been since studied in various papers.

The \emph{defect} of a finite word $u$ is defined by $\text{D}(u)=|u|+1-|\textrm{Pal}(u)|$.
The \emph{defect} of a infinite word $x$ is defined by $\text{D}(x)=\text{sup}\{\text{D}(u) |
u\ \text{is a prefix of}\ x\}.$ This quantity can be finite or infinite. Thus an infinite word is
rich  if and only if its defect is equal to $0.$

We will make use of the following result from \cite{gjwz} characterizing rich words
according to complete first returns.

\begin{theorem}[\cite{gjwz}, Theorem 2.14 and Remark 2.15.]\label{EJC}
An infinite word $x\in A^\omega$ is rich if and only if all complete first returns to any
palindromic factor in $x$ are themselves palindromes.
\end{theorem}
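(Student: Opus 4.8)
The plan is to reduce everything to a single combinatorial lemma about longest palindromic suffixes. For a finite word $w$, write $\lambda(w)$ for the longest palindromic suffix of $w$. The key auxiliary fact, which I would establish first, is the following \emph{unioccurrence criterion}: a finite word $w$ is rich if and only if for every nonempty prefix $p$ of $w$ the suffix $\lambda(p)$ occurs exactly once in $p$. To prove this I would use a counting argument. Appending a letter $a$ to a word $z$ can create at most one new palindromic factor, and when it does, that new palindrome is necessarily $\lambda(za)$ (any palindromic factor of $za$ not already in $z$ must end at the last position, hence be a palindromic suffix, and two distinct new palindromic suffixes are impossible since the shorter would be both a suffix and, by palindromicity, a prefix of the longer and so occur earlier); moreover $\lambda(za)$ fails to already occur in $z$ precisely when $\lambda(za)$ is unioccurrent in $za$. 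Building $w$ letter by letter from $\varepsilon$ (which has the single palindromic factor $\varepsilon$), the count $\#\mathrm{Pal}$ thus increases by exactly one at each step iff the current prefix has a unioccurrent longest palindromic suffix; summing over all $|w|$ steps, $\#\mathrm{Pal}(w)=|w|+1$ iff every nonempty prefix of $w$ has a unioccurrent longest palindromic suffix. This is exactly the bound of Droubay, Justin and Pirillo being attained at every step.

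Granting this criterion, I would prove the forward implication as follows. Suppose $x$ is rich, let $u$ be a palindromic factor, and let $w$ be a complete first return to $u$, so $w$ begins and ends with $u$ and contains exactly two occurrences of $u$. As a factor of the rich word $x$, the finite word $w$ is rich, so applying the criterion to the prefix $w$ of itself, $\lambda(w)$ is unioccurrent in $w$. Since $w$ ends with the palindrome $u$, we have $|\lambda(w)|\geq|u|$, so $\lambda(w)$ ends with $u$; being a palindrome, $\lambda(w)$ therefore also begins with $u$. If $\lambda(w)$ were a proper suffix of $w$, then this leading occurrence of $u$ inside $\lambda(w)$ would start at a position $\geq 1$ of $w$, and hence (as $u$ occurs in $w$ only at the very start and the very end) it would have to coincide with the final occurrence of $u$; this forces $\lambda(w)=u$. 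But $u$ occurs twice in $w$, contradicting the unioccurrence of $\lambda(w)=u$. Therefore $\lambda(w)=w$, i.e.\ $w$ is a palindrome.

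For the converse I would argue by contraposition. Assume all complete first returns to palindromic factors of $x$ are palindromes, and suppose $x$ is not rich. Then some factor $g$ of $x$ is not rich, so by the criterion some prefix $p$ of $g$ has $q:=\lambda(p)$ occurring at least twice in $p$. Let the last two consecutive occurrences of $q$ in $p$ begin at positions $i<j$, where $j=|p|-|q|$ is the suffix occurrence, and let $r$ be the factor of $p$ from position $i$ to the end of $p$. Then $r$ begins and ends with $q$ and contains exactly two occurrences of $q$, so $r$ is a complete first return to the palindromic factor $q$ in $x$; by hypothesis $r$ is a palindrome. But $r$ is a suffix of $p$ with $|r|=|p|-i>|p|-j=|q|=|\lambda(p)|$, contradicting the maximality of the longest palindromic suffix of $p$. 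Hence $x$ is rich.

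The main obstacle is the unioccurrence criterion itself, since both implications rest on it; within the two directions the delicate point is the positional argument in the forward direction showing that a proper longest palindromic suffix of a complete first return would be forced to equal $u$ itself. Once $\lambda$ and the criterion are in hand, the remaining steps are essentially bookkeeping about where occurrences of $u$ and $q$ can sit.
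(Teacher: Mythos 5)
Your proof is correct. Note, however, that the paper does not prove this statement at all: it is imported verbatim from Glen--Justin--Widmer--Zamboni (Theorem~2.14 and Remark~2.15 of the cited reference), and your argument --- reducing both implications to the Droubay--Justin--Pirillo unioccurrence criterion for longest palindromic suffixes of prefixes --- is essentially the proof given in that source, so there is nothing genuinely different to compare.
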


A morphism $f:A^*\rightarrow B^*$ is said to be of class P if there exists a palindrome
$p\in B^*$ and  palindromes $\{q_a\}_{a\in A}\subset  B^*$  such that $f(a)=pq_a$ for each
$a\in A.$ Let $\cP$ denote the set of all morphisms of class P. One problem with class P
morphisms is that they are not closed under composition. For instance, the
morphisms $f: 0\mapsto 0, 1\mapsto 01$ and $g: 0\mapsto 01, 1\mapsto 011$ are both in class P while the composition  $gf: 0\mapsto 01, 1\mapsto 01011$ is not.

Two morphisms $f,g:A^*\rightarrow B^*$ are said to be {\it conjugate} if there exists $u\in
B^*$ such that either $f(a)u=ug(a)$ for all $a\in A,$ or $uf(a)=g(a)u$ for all $a\in A.$ For
example, the morphisms $f: 0\mapsto 001, 1\mapsto 0010010010010$ is conjugate to the
morphism $g: 0\mapsto 010, 1\mapsto 0100100010010.$ In fact, taking $u=0010010,$ it is
readily verified that $f(a)u=ug(a)$ for $a\in \{0,1\}.$ Let $\cP'$ denote the set of all
morphisms $f:A^*\rightarrow B^*$  conjugate to some morphism in $\cP.$ We shall now
show that the class $\cP'$ is closed under composition. For this we need four lemmas the
first of which is a basic combinatorial result of words: 

\begin{lemma}[Proposition 1.3.4 in Lothaire~\cite{Lothaire}]\label{basic}
Let $w_1u=uw_2$ for words $w_1,w_2,u \in A^*$. Then there are words $x$ and $y$ such
that $w_1=xy$, $w_2=yx$ and $u=(xy)^kx$ for some non-negative $k \ge 0$.
\end{lemma}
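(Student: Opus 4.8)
The plan is to argue by induction on $|u|$, after first noting that comparing lengths in $w_1u=uw_2$ forces $|w_1|=|w_2|$. The base case is $u=\varepsilon$, where the hypothesis reads $w_1=w_2$ and one simply takes $x=\varepsilon$, $y=w_1$ and $k=0$. For the inductive step I would split according to whether $|u|$ is at least $|w_1|$ or strictly smaller, using in each case the elementary fact that if two words are equal then whichever of two of their prefixes is shorter is itself a prefix of the longer.

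First I would treat the case $|u|<|w_1|$, which turns out to be the genuinely terminal one. Reading off the length-$|u|$ prefix of both sides of $w_1u=uw_2$, the left side begins with the length-$|u|$ prefix of $w_1$ while the right side begins with $u$; hence $u$ is a prefix of $w_1$ and we may write $w_1=us$. Substituting and cancelling the leading $u$ gives $w_2=su$. Setting $x=u$ and $y=s$ then yields $w_1=xy$, $w_2=yx$ and $u=x=(xy)^0x$, so the conclusion holds with $k=0$.

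Next, for $|u|\ge|w_1|$, the same prefix comparison shows instead that $w_1$ is a prefix of $u$, say $u=w_1t$ with $|t|=|u|-|w_1|$. Substituting into $w_1u=uw_2$ and cancelling the leading $w_1$ produces the strictly shorter relation $w_1t=tw_2$, to which the induction hypothesis applies (provided $w_1\neq\varepsilon$, the only degenerate situation, in which $w_1=w_2=\varepsilon$ and the claim is trivial). This gives $x,y$ with $w_1=xy$, $w_2=yx$ and $t=(xy)^kx$, whence $u=w_1t=(xy)(xy)^kx=(xy)^{k+1}x$, completing the step with exponent $k+1$.

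The argument is essentially bookkeeping, so I do not expect a serious obstacle; the only points demanding care are justifying the two prefix inclusions from the length comparison and checking that each cancellation is legitimate, since both sides genuinely share the common factor being removed. The mild edge case $w_1=\varepsilon$ must be isolated at the outset, since otherwise the passage to $w_1t=tw_2$ fails to strictly decrease the length parameter $|u|$ on which the induction is run.
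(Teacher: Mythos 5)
The paper offers no proof of this lemma --- it is quoted directly from Lothaire (Proposition 1.3.4) --- so there is no in-paper argument to compare against. Your induction on $|u|$ is the standard textbook proof, and the two main cases are handled correctly: when $|u|<|w_1|$ the prefix comparison gives $w_1=us$, hence $w_2=su$ after cancelling the leading $u$, and $(x,y,k)=(u,s,0)$ works; when $|u|\ge|w_1|$ with $w_1\neq\varepsilon$, the reduction to $w_1t=tw_2$ with $|t|=|u|-|w_1|<|u|$ is legitimate and the bookkeeping $u=w_1t=(xy)^{k+1}x$ is right.

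The one point I would push back on is your dismissal of the degenerate case. If $w_1=\varepsilon$ and $u\neq\varepsilon$, then indeed $w_2=\varepsilon$, but the conclusion is not ``trivial'' --- it is false: $w_1=xy=\varepsilon$ forces $x=y=\varepsilon$, hence $(xy)^kx=\varepsilon\neq u$. Thus $w_1=w_2=\varepsilon$, $u=a$ is a counterexample to the lemma exactly as stated here (the classical formulation assumes $w_1$ nonempty, a hypothesis the paper's restatement drops). Your proof in effect rediscovers this, but papers over it; you should state explicitly that either $w_1\neq\varepsilon$ must be assumed or that in the excluded case one can only conclude $w_2=\varepsilon$ with no decomposition of $u$. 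With that caveat made explicit, the argument is complete and correct.
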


\begin{lemma}\label{eqrel}
Conjugation of morphisms is an equivalence relation.
\end{lemma}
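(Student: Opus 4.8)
The plan is to verify the three defining properties of an equivalence relation, treating transitivity as the substantive part. It is convenient to write $f \sim_u g$ as shorthand for the first alternative $f(a)u = ug(a)$ holding for every $a \in A$; the second alternative $uf(a)=g(a)u$ is then exactly $g \sim_u f$. With this notation, two morphisms are conjugate precisely when $f \sim_u g$ or $g \sim_u f$ for some $u \in B^*$. Reflexivity is immediate by taking $u=\varepsilon$, and symmetry is built into the ``either/or'' of the definition, since the condition ``$f \sim_u g$ or $g\sim_u f$'' is manifestly symmetric in $f$ and $g$.

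For transitivity, suppose $f$ is conjugate to $g$ and $g$ is conjugate to $h$. The first tool I would record is a composition rule: if $f\sim_u g$ and $g\sim_v h$, then right-multiplying $f(a)u=ug(a)$ by $v$ and substituting $g(a)v=vh(a)$ gives $f(a)uv = uvh(a)$, so $f\sim_{uv}h$. A trivial induction also gives $f\sim_u g \Rightarrow f(W)u = ug(W)$ for every word $W\in A^*$. Expanding the definition of conjugacy into its two alternatives on each side produces four cases. Two of them are ``chains'' ($f\sim_u g$ with $g\sim_v h$; and $g\sim_u f$ with $h\sim_v g$) and follow at once from the composition rule, yielding $f\sim_{uv}h$ or $h\sim_{vu}f$ respectively.

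The remaining two cases are the crux: the ``same-sided'' configurations $f\sim_u g$ together with $h\sim_v g$ (both conjugating into $g$), and $g\sim_u f$ together with $g\sim_v h$ (both conjugating out of $g$). These are symmetric in the two conjugating words, so I may assume $|u|\ge|v|$. For the first, using $ug(W)=f(W)u$ and $vg(W)=h(W)v$ for all $W$, I would choose $W$ with $|g(W)|\ge|u|$ (which exists unless $g$ erases every letter, a degenerate case in which $f=h$ is the empty morphism). Comparing the last $|u|$ and $|v|$ letters of $g(W)$ in the two identities forces both $u$ and $v$ to be suffixes of $g(W)$; since $|u|\ge|v|$, this makes $v$ a suffix of $u$, say $u=tv$. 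Substituting into $f(a)u=ug(a)$, using $vg(a)=h(a)v$, and right-cancelling $v$ then yields $f(a)t=th(a)$, i.e. $f\sim_t h$. The ``same-sided out'' case is handled symmetrically, with prefixes in place of suffixes and left-cancellation, producing $h(a)t=tf(a)$. In every case $f$ and $h$ are conjugate, completing transitivity.

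The main obstacle is exactly this same-sided analysis: establishing that the shorter conjugating word is a suffix (respectively prefix) of the longer one. I would resolve it via the common-factor argument above; alternatively one can feed the relation $h(a)v=vg(a)$ into Lemma~\ref{basic} to expose the internal structure of $v$ and reach the same conclusion.
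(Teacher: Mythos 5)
Your proof is correct, and its skeleton matches the paper's: reflexivity via the empty word, symmetry read off from the either/or in the definition, and a four-case analysis for transitivity in which the two chain cases compose directly and the two same-sided cases reduce to showing that the shorter conjugating word is a suffix (resp.\ prefix) of the longer, after which one substitutes and cancels to get $f(a)t=th(a)$. Where you genuinely diverge is in how you establish that comparability. The paper feeds both relations into Lemma~\ref{basic} (Lothaire's commutation lemma), obtaining $u=(xy)^kx$ and $v=(sr)^ts$ with $yx=rs=g(a)$, and then refines the two factorizations of $g(a)$ to exhibit one of $u,v$ as a suffix of the other. You instead observe that $f(W)u=ug(W)$ forces $u$ to be a suffix of $g(W)$ as soon as $|g(W)|\ge|u|$, and likewise for $v$, so both are suffixes of a common word and hence suffix-comparable. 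Your route is more elementary (no appeal to the conjugate-factorization structure of Lemma~\ref{basic}) and it transparently isolates the degenerate case of an erasing $g$, which you rightly flag and dispose of; the paper's route is purely local, working letter by letter without needing to pump up a long image $g(W)$, and reuses a lemma it has already quoted. Both arguments land on the same cancellation step, so they are interchangeable here.
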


\begin{proof}
Clearly conjugation of morphisms is both reflexive and symmetric by definition. For
transitivity, suppose $f,g,h: A^* \to B^*$ are such that $f$ is conjugate to $g$, and
$g$ is conjugate to~$h$. We divide the proof to cases according to the two types in the
definition of conjugacy.

\smallskip
\noindent \textbf{Case 1}: Suppose there exist $u$ and $v$ in $B^*$ such that $f(a)u=ug(a)$
and $g(a)v=vh(a)$ for all $a \in A$. Then for all $a \in A$, we have $f(a)uv=ug(a)v=uvh(a)$
and hence $f$ is conjugate to $h$.

\smallskip
\noindent \textbf{Case 2}: Suppose there exist $u$ and $v$ such that $f(a)u=ug(a)$ and
$vg(a)=h(a)v$ for all $a \in A$.

We show first that $u$ is a suffix of $v$, or $v$ is a suffix of $u$. By Lemma~\ref{basic},
$f(a)u=ug(a)$  implies that $f(a)=xy$, $g(a)=yx$ and $u=(xy)^k x$ for some $x,y$ and $k \ge
0$. Similarly, $vg(a)=h(a)v$ implies that $g(a)=rs$, $h(a)=sr$ and $v= (sr)^t s$ for some
$r,s$ and $t\ge 0$. Now, $g(a)=yx=rs$, where by symmetry we can assume that $y=rw$ and
hence $s=wx$ for $w \in A^*$. Then $u= (xrw)^k x$ and $v=(wxr)^t wx=w(xrw)^tx$ and,
indeed one is a suffix of the other.

\smallskip

Without restriction we can assume that $u$ is a suffix of $v$, say $v=wu$. Then we have
\[
wf(a)u = wug(a)= vg(a)=h(a)v = h(a)wu,
\]
and hence $wf(a)=h(a)w$ for all $a \in A$. Hence $f$ is conjugate to $h$.

The other two cases (Case~3: $uf(a)=g(au)$ and $vg(a)=h(a)v$ for all $a \in A$; Case~4:
$uf(a)=g(a)u$ and $g(a)v=vh(a)$ for all $a \in A$) are obtained from Cases~1 and~2 by
interchanging~$f$ and~$h$ and using the symmetry condition of conjugation.
\end{proof}

The following lemma shows that the conjugation of morphisms is compatible with
composition.

\begin{lemma}\label{concat}
Let $f,f': A^*\to B^*$ and $g,g': B^*\to C^*$ be morphisms such that $f$ is
conjugate to $f'$ and $g$ is conjugate to $g'$. Then the compositions $g f$ and $g' f'$ are
conjugate.
\end{lemma}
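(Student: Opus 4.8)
The plan is to factor the problem through the intermediate morphism $g'f$ and then invoke transitivity of conjugation (Lemma~\ref{eqrel}). Concretely, I would prove separately that $gf$ is conjugate to $g'f$ and that $g'f$ is conjugate to $g'f'$; combining these two facts with Lemma~\ref{eqrel} immediately yields that $gf$ is conjugate to $g'f'$, which is the assertion.

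For the first conjugacy, the key observation is that a conjugacy between $g$ and $g'$ on single letters propagates to arbitrary words of $B^*$. Suppose $g(b)u = ug'(b)$ for all $b \in B$ (the reversed type of conjugacy being handled symmetrically). Then a straightforward induction on word length shows $g(v)u = ug'(v)$ for every $v \in B^*$: writing $v = v'b$, one has $g(v'b)u = g(v')g(b)u = g(v')ug'(b) = ug'(v')g'(b) = ug'(v'b)$, using the inductive hypothesis in the penultimate step. Specializing to $v = f(a)$ gives $(gf)(a)\,u = u\,(g'f)(a)$ for all $a \in A$, so $gf$ and $g'f$ are conjugate via $u$.

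For the second conjugacy, I would simply apply the outer morphism $g'$ to a conjugacy relation between $f$ and $f'$. If $f(a)t = tf'(a)$ for all $a \in A$, then applying $g'$ to both sides yields $g'(f(a))\,g'(t) = g'(t)\,g'(f'(a))$, that is, $(g'f)(a)\,g'(t) = g'(t)\,(g'f')(a)$ for all $a \in A$. Hence $g'f$ and $g'f'$ are conjugate via the word $g'(t) \in C^*$. In both steps the opposite orientation of the hypothesized conjugacy ($ug(b) = g'(b)u$, respectively $tf(a) = f'(a)t$) is treated identically, merely interchanging the roles of left and right.

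I do not anticipate a genuine obstacle: in contrast to the proof of Lemma~\ref{eqrel}, neither step here requires Lemma~\ref{basic}, since the conjugating words are produced explicitly rather than compared. The only mild bookkeeping is tracking the two possible orientations of each of the two input conjugacies and verifying that transitivity (Lemma~\ref{eqrel}) is genuinely applicable to glue the two resulting conjugacies together. Passing through $g'f$ is precisely what keeps each individual step elementary.
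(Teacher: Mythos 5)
Your proof is correct and rests on the same two computations as the paper's --- propagating the letter-level conjugacy between $g$ and $g'$ through the images $f(a)$, and applying the outer morphism to the conjugacy between $f$ and $f'$ --- merely organized uniformly as a factorization through the intermediate morphism $g'f$ glued by transitivity from Lemma~\ref{eqrel}, whereas the paper's Case~1 exhibits the single conjugating word $vg'(u)$ directly and only its Case~2 resorts to such a chain. This is essentially the same approach; the one small caveat is that your remark that Lemma~\ref{basic} is not needed is only true of the two individual steps, since the mixed-orientation case of transitivity in Lemma~\ref{eqrel}, which you invoke, does use it.
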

\begin{proof}
Again we have cases to consider. Notice first that if $h,h': A^* \to B^*$ are any
functions that satisfy $h(a)x=xh'(a)$ for all $a \in A$ and some $x \in B^*$, then $h(w)x =
xh'(w)$ for all $w \in A^*$.

\smallskip
\noindent \textbf{Case 1}: Suppose there exist $u\in B^*$ and $v \in C^*$ such that
$f(a)u=uf'(a)$ and $g(a)v=vg'(a)$ for all $a \in A$. Then for all $a \in A$, we have
\[
gf(a)\cdot vg'(u) = vg'(f(a))g'(u) = vg'(f(a)u)=vg'(uf'(a))=vg'(u)\cdot g'f'(a),
\]
and hence $gf$ is conjugate to $g'f'$.

\smallskip
\noindent \textbf{Case 2}: Suppose there exist $u$ and $v$ such that $f(a)u=uf'(a)$ and
$vg(a)=g'(a)v$ for all $a \in A$. By Case~1, we have that $gf'$ is conjugate to $g'f$. Also,
$gf$ is conjugate to $gf'$, since $gf(a)\cdot g(u) = g(f(a)u) = g(uf'(a)) = g(u)\cdot gf'(a)$
and similarly $g'f$ is conjugate to $g'f'$. Hence, by transitivity, $gf$ is conjugate to $g'f'$.

Again, the other two cases follow from the above cases.
\end{proof}

\begin{lemma}\label{comp:lemma}
Let $f:  A^* \to B^*$ and  $g: B^* \to C^*$ be class $\cP$ morphisms. Then the
composition $gf$ is in $\cP'$.
\end{lemma}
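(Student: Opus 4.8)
The plan is to reduce the whole statement to one observation about how a class $\cP$ morphism acts on a single palindrome, and then to clear a common prefix by a conjugation. Write $f(a)=pq_a$ with $p$ and all $q_a$ palindromes in $B^*$, and $g(b)=rs_b$ with $r$ and all $s_b$ palindromes in $C^*$. First I would establish the key sub-claim: \emph{if $u=b_1b_2\cdots b_k\in B^*$ is a nonempty palindrome, then $g(u)=rP_u$, where $P_u:=s_{b_1}rs_{b_2}r\cdots rs_{b_k}$ is a palindrome of $C^*$.} The factorization $g(u)=(rs_{b_1})(rs_{b_2})\cdots(rs_{b_k})=r\,P_u$ is immediate from the definition of $g$. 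To see that $P_u$ is a palindrome I would read it backwards: since $r$ and each $s_b$ is a palindrome, the reversal of $P_u$ is the word $s_{b_k}rs_{b_{k-1}}r\cdots rs_{b_1}$, and because $u$ is a palindrome we have $b_i=b_{k+1-i}$, hence $s_{b_i}=s_{b_{k+1-i}}$, and this reversed word equals $P_u$.

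Next I would apply the sub-claim to the two palindromes $p$ and $q_a$ occurring in $f$. Since $gf(a)=g(pq_a)=g(p)g(q_a)$, the sub-claim gives $g(p)=rP_p$ and $g(q_a)=rP_{q_a}$ with $P_p$ and $P_{q_a}$ palindromes, so that $gf(a)=rP_p\,rP_{q_a}$ for every $a\in A$. The degenerate situations $p=\varepsilon$ or $q_a=\varepsilon$ I would treat separately; they are in fact easier, since one of the two factors $g(p)$, $g(q_a)$ then drops out (when $p=\varepsilon$ one already gets $gf(a)=rP_{q_a}\in\cP$ outright, and when $q_a=\varepsilon$ one gets $gf(a)=rP_p$). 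Note also that $gf$ is non-erasing, being a composition of non-erasing morphisms.

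Finally I would conjugate by the common prefix $r$. Setting $h(a):=P_p\,rP_{q_a}r$, one checks directly that $gf(a)\cdot r=rP_p\,rP_{q_a}\,r=r\cdot h(a)$ for all $a\in A$, which is exactly the conjugacy relation of the definition with conjugating word $r$; thus $gf$ is conjugate to $h$. Now $h(a)=P_p\cdot(rP_{q_a}r)$ exhibits $h$ as a class $\cP$ morphism: the prefix $P_p$ is a palindrome that does not depend on $a$, and each block $rP_{q_a}r$ is a palindrome, being a product of the palindromes $r,P_{q_a},r$. In the degenerate case $q_a=\varepsilon$ the corresponding block is simply $r$, which is still a palindrome, and since $h(a)$ and $gf(a)$ have equal length, $h$ is non-erasing. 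Hence $h\in\cP$, and therefore $gf\in\cP'$.

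I expect the main friction to be bookkeeping rather than conceptual. The substance is entirely contained in the reversal computation of the sub-claim; the remaining work is to match the conjugation direction to the precise form of the definition (here the relation $gf(a)\,r=r\,h(a)$) and to dispose of the empty-word cases so that the resulting $h$ is a genuine non-erasing class $\cP$ morphism with a uniform palindromic prefix $P_p$ and palindromic blocks.
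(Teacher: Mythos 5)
Your proof is correct and follows essentially the same route as the paper's: both factor $gf(a)=g(p)g(q_a)$, use the observation that $g$ maps a palindrome $u$ to $r$ times a palindrome (your $P_u$, the paper's $q^{-1}g(u)$), and then conjugate by the common prefix $r$ to land in class $\cP$. Your write-up is somewhat more explicit about the reversal computation and the degenerate cases (and your formula $h(a)=P_p\,r$ when $q_a=\varepsilon$ keeps the conjugating word uniform, which the paper's corresponding clause glosses over), but the underlying argument is the same.
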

\begin{proof}
Let $f(a)=pu_a$ for all $a \in A$ where $p$ and each $u_a$ is a palindrome, and $g(b)=
qv_b$ for all $b \in B$ where $q$ and each $v_a$ is a palindrome. If $p$ is empty, then
$u_a$ is nonempty, and $gf(a)=g(u_a)= q\cdot q^{-1}g(u_a)$ where $q$ and $q^{-1}g(u_a)$
are palindromes for all $a$. If $p$ is nonempty, then $gf(a) = g(pu_a)= g(p)g(u_a)$ and $gf$
is conjugate to $h$ defined by $h(a) =q^{-1}g(p)\cdot g(u_a)q$, if $u_a$ is non-empty, and
$h(a)=q^{-1}g(p)$ if $u_a$ is empty. The words $q^{-1}g(p)$ and $g(u_a)q$ are
palindromes, and hence $h \in \cP$, and $gf \in \cP'$ as required.
\end{proof}

\begin{proposition}\label{comp}
Class $\cP'$ is closed under composition.
\end{proposition}
\begin{proof}
Suppose $f:  A^* \to B^*$ and  $g: B^* \to C^*$ are in $\cP'$. Then there exist
class $\cP$ morphisms $f': A^*\to B^*$ and  $g': B^* \to C^*$ with $f$ conjugate
to $f'$ and $g$ conjugate to $g'$. By Lemma~\ref{concat}, the  composition $gf$ is
conjugate to $g'f$' and, by Lemma~\ref{comp:lemma}, the composition $g'f'$ is conjugate
to a class $\cP$ morphism $h$. By transitivity in Lemma~\ref{eqrel}, $gf$ is conjugate to
$h$, and hence by definition of $\cP'$', the composition $gf$ is in $\cP'$.
\end{proof}

In \cite{BPS}, Balkov\'a et al. introduce a related class of morphisms, denoted $\cPret,$
defined as follows: A morphism $f:A^*\rightarrow  B^*$ is in $\cPret$ if there exists a
palindrome $p$ such that for each $a\in A$ we have that $f(a)p$ is a palindrome, $f(a)p$
begins and ends in $p,$ $|f(a)p|_p=2,$ and $f(a)\neq f(b)$ whenever $a,b\in A$ with $a\neq
b.$ We call the palindrome $p$ the {\it marker}. For instance, the morphism $f: 0\mapsto
0, 1\mapsto 01$ is  in $\cP \cap \cPret.$ Here the marker is $p=0.$ In contrast, the
morphism $f:0\mapsto 00, 1\mapsto 01$ is in $\cP$ but not in $\cPret.$ While the morphism
$f: 0\mapsto 001, 1\mapsto 0010$ is in $\cPret$ (with marker $p=00100)$ but not in $\cP.$
They show that:

\begin{proposition}[Balkov\'a et al., Proposition 5.4 in \cite{BPS}]\label{subset}
$\cPret\subset \cP'.$ \end{proposition}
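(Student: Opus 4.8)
The plan is to produce, for a given $f\in\cPret$ with palindromic marker $p$, an explicit class $\cP$ morphism conjugate to $f$, taking as conjugating word the \emph{first half} of $p$. Throughout, write $\widetilde{u}$ for the reversal (mirror image) of a word $u$. Since $p$ is a palindrome, write $p=\pi c\,\widetilde\pi$ when $|p|$ is odd (with $c\in B$ the central letter and $\pi$ the prefix of length $(|p|-1)/2$), and $p=\pi\,\widetilde\pi$ when $|p|$ is even. The point to stress is that $\pi$ depends only on the marker $p$, hence is common to all letters $a\in A$; this is exactly what will let one conjugacy handle every image simultaneously. I would first record the elementary reformulation of the defining property: because $p$ is a palindrome, $\widetilde{f(a)p}=\widetilde p\,\widetilde{f(a)}=p\,\widetilde{f(a)}$, so $f(a)p$ is a palindrome if and only if $f(a)p=p\,\widetilde{f(a)}$ for every $a\in A$.

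Next I would define the candidate conjugate $h$ by the relation $f(a)\pi=\pi\,h(a)$, i.e.\ $h(a)=\pi^{-1}f(a)\pi$, and verify that it is well defined. This is the one point needing care: $\pi$ may be longer than $f(a)$ (which happens precisely when $p$ is highly periodic), so it is not automatic that $\pi^{-1}f(a)\pi$ is a genuine word. However, since $\pi$ is a prefix of $p$, the word $f(a)\pi$ is a prefix of $f(a)p$; and $f(a)p$ begins with $p$, hence with $\pi$. As $|f(a)\pi|\ge|\pi|$, the first $|\pi|$ letters of $f(a)\pi$ coincide with those of $f(a)p$, namely $\pi$. Thus $\pi$ is a prefix of $f(a)\pi$, so $h(a)$ is a well-defined word (of length $|f(a)|$), and $f$ is conjugate to $h$ via $u=\pi$.

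Finally I would substitute $p=\pi c\,\widetilde\pi$ into the identity $f(a)p=p\,\widetilde{f(a)}$ and simplify using $f(a)\pi=\pi\,h(a)$ together with $\widetilde\pi\,\widetilde{f(a)}=\widetilde{f(a)\pi}=\widetilde{\pi\,h(a)}=\widetilde{h(a)}\,\widetilde\pi$. This turns the identity into $\pi\,h(a)\,c\,\widetilde\pi=\pi\,c\,\widetilde{h(a)}\,\widetilde\pi$; cancelling the common prefix $\pi$ and suffix $\widetilde\pi$ leaves $h(a)\,c=c\,\widetilde{h(a)}$. Since $h(a)$ is nonempty, comparing first letters forces $h(a)$ to begin with $c$, say $h(a)=c\,q_a$; then $h(a)\,c=c\,\widetilde{h(a)}$ reduces to $c\,q_a\,c=c\,\widetilde{q_a}\,c$, i.e.\ $q_a=\widetilde{q_a}$, so each $q_a$ is a palindrome. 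Hence $h(a)=c\,q_a$ exhibits $h$ as a class $\cP$ morphism with common palindromic prefix $c$, giving $f\in\cP'$. In the even case the same computation yields $h(a)=\widetilde{h(a)}$, so every $h(a)$ is a palindrome and $h\in\cP$ with empty prefix.

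I expect the main obstacle to be conceptual rather than computational: the natural first attempt, conjugating by the whole marker $p$, fails (it typically produces a morphism lying outside $\cP$), and the decisive idea is to conjugate by only \emph{half} of $p$, so that the leftover central letter $c$ becomes the common class $\cP$ prefix. The only genuinely delicate verification along the way is the well-definedness of $h(a)$ when $f(a)$ is shorter than $\pi$, which is handled by the prefix argument above. (One could alternatively extract the factorization $f(a)=x_ay_a$ into two palindromes from Lemma~\ref{basic} applied to $f(a)p=p\,\widetilde{f(a)}$, but the half-marker conjugation makes this detour unnecessary.)
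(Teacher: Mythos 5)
The paper does not actually prove this proposition: it is imported verbatim from Balkov\'a, Pelantov\'a and Starosta \cite{BPS}, with only the remark that their proof is compatible with the present paper's definition of conjugacy. So there is no in-paper argument to compare against, and your proposal must stand on its own --- and it does. The reformulation of ``$f(a)p$ is a palindrome'' as $f(a)p=p\,\widetilde{f(a)}$ is correct; the well-definedness of $h(a)=\pi^{-1}f(a)\pi$ is indeed the one delicate point, and your prefix argument (that $f(a)\pi$ is a prefix of $f(a)p$ of length at least $|\pi|$, while $f(a)p$ begins in $p$ and hence in $\pi$) settles it; and the cancellation yielding $h(a)c=c\,\widetilde{h(a)}$ in the odd case, respectively $h(a)=\widetilde{h(a)}$ in the even case, correctly exhibits $h\in\cP$ with common palindromic prefix $c$, respectively $\varepsilon$. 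Two small points you should make explicit: the step ``comparing first letters'' needs $h(a)\neq\varepsilon$, i.e.\ $f(a)\neq\varepsilon$, which follows from the condition $|f(a)p|_p=2$ (since $|p|_p=1$ for nonempty $p$, and the degenerate case $p=\varepsilon$ forces $|f(a)|=1$); and you use only that $f(a)p$ is a palindrome beginning in $p$, so the injectivity clause and the clause $|f(a)p|_p=2$ beyond nonemptiness are not needed for the conclusion --- worth a sentence so the reader sees no hypothesis has been silently dropped. The half-marker conjugation is the natural mechanism here and is presumably close in spirit to the argument of \cite{BPS}, but since that proof is not reproduced in this paper, your write-up serves as a self-contained substitute.
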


We note that while the definition of  "conjugacy" of two morphisms given in \cite{BPS} is
not the same as ours, their proof of Proposition 5.4 is consistent with our definition while
inconsistent with theirs. As it turns out, they intended for their definition to read the same
as ours \cite{BPS2}.

\section{Primitive morphic words of finite defect}

Let $\FP$ denote the set of all infinite words $x$ which are fixed by some primitive
substitution $f\in \cP'.$ We recall that the class P conjecture states that if $y$ is a
palindromic pure primitive morphic word, then $y\in \FP.$ Labb\'e's counter-example in
\cite{Lab2} shows that the conjecture as stated is false even if $y$ is rich. Instead, we
show:

\begin{theorem}\label{rich} Let $y\in A^\omega$ be a rich primitive morphic word.
Then there exists a morphism $g\in \cP'$ and a rich word $x\in \FP$ such that $y=g(x).$
\end{theorem}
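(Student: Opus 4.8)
The plan is to realize $y$ as $f_u$ applied to one of its own derived words, for a cleverly chosen palindromic prefix $u$, and to extract the self-substitution from Durand's finiteness theorem. We may assume $y$ is aperiodic, the ultimately periodic case being elementary (or covered by \cite{ABCD}). The first step is to manufacture palindromic prefixes. Since $y$ is rich it is palindromic, and being primitive morphic it is uniformly recurrent. Starting from the length-one palindromic prefix $C_0=y_0$, I would repeatedly take the complete first return occurring at position $0$: if $C_i$ is a palindromic prefix then by uniform recurrence $C_i$ recurs, and the complete first return to $C_i$ starting at position $0$ is again a prefix, is strictly longer than $C_i$, and is a palindrome by Theorem~\ref{EJC}. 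Iterating produces palindromic prefixes $C_0,C_1,C_2,\ldots$ of strictly increasing length, each a prefix of the next.

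For any palindromic prefix $u$, extend the order-preserving bijection $f_u$ to a morphism $f_u\colon A_u(y)^*\to A^*$, so that $y=f_u(\D_u(y))$. I claim $f_u\in\cPret$ with marker $u$: the words $f_u(a)u$ are exactly the complete first returns to $u$, hence palindromes beginning and ending in $u$ with $|f_u(a)u|_u=2$ (Theorem~\ref{EJC}), and $f_u$ is injective since distinct return words receive distinct labels; Proposition~\ref{subset} then gives $f_u\in\cP'$. The engine of the whole argument is the lifting identity: for every finite word $z$ over $A_u(y)$ one has $\widetilde{f_u(z)u}=f_u(\widetilde z)u$, where $\widetilde{\cdot}$ denotes reversal. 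This follows by induction from $u\widetilde{f_u(a)}=f_u(a)u$ (itself just the palindromicity of $f_u(a)u$), and it shows that $f_u(z)u$ is a palindrome if and only if $z$ is. Using this I would prove that $x:=\D_u(y)$ is rich by verifying the criterion of Theorem~\ref{EJC}: a complete first return to a palindromic factor $q$ of $x$ lifts, under $f_u$, to a complete first return to the palindrome $f_u(q)u$ in $y$ (occurrences of $f_u(q)u$ in $y$ being in order-preserving bijection with occurrences of $q$ in $x$), which is a palindrome because $y$ is rich; the lifting identity then pulls this back to show the return in $x$ is itself a palindrome.

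Now I would invoke Durand's Theorem~\ref{durand}: the set $\{\D_u(y):u\in\Pre(y)\}$ is finite, so among the infinitely many palindromic prefixes $C_0,C_1,\ldots$ two indices $j<k$ satisfy $\D_{C_j}(y)=\D_{C_k}(y)=:x$. Put $u=C_j$ and $u'=C_k$, so $u$ is a proper prefix of $u'$. Since each return to $u'$ decomposes uniquely into returns to $u$, there is a connecting morphism $\lambda$ with $\D_u(y)=\lambda(\D_{u'}(y))$ and $f_{u'}=f_u\circ\lambda$; as the two derived words coincide this reads $x=\lambda(x)$, so $x$ is a fixed point of $\lambda$ (note $\lambda$ maps the common alphabet $A_u(y)=A_{u'}(y)$ to itself and is non-erasing). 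Because $u'$ is a palindrome beginning and ending in $u$, it decomposes along return words as $u'=f_u(P)u$ for a prefix $P$ of $x$, and the lifting identity forces $P$ to be a palindrome. This $P$ is the marker I will use for $\lambda$.

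The crux is to show $\lambda\in\cPret$ with marker $P$, which is where all the preparation pays off. For each letter $b$ one computes $f_u(\lambda(b)P)u=f_u(\lambda(b))f_u(P)u=f_{u'}(b)u'$, the complete first return to $u'$ in $y$, which is a palindrome by Theorem~\ref{EJC}; the lifting identity then makes $\lambda(b)P$ a palindrome, so it begins and ends in $P$, while $|\lambda(b)P|_P=|f_{u'}(b)u'|_{u'}=2$ and $\lambda$ is injective. Hence $\lambda\in\cPret\subset\cP'$ by Proposition~\ref{subset}. Primitivity of $\lambda$ follows from $y$ being primitive morphic (equivalently, linearly recurrent), so $x\in\FP$; together with $y=f_u(x)$ and $f_u\in\cP'$ this proves the theorem with $g=f_u$. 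I expect the main obstacle to be exactly this last step --- recognizing that the correct marker for the self-substitution is the palindromic prefix $P$ cut out by $u'$ at the level of $x$, and checking the $\cPret$ conditions through the lifting identity; the finiteness input from Durand and the richness transfer are comparatively routine once the lifting identity $\widetilde{f_u(z)u}=f_u(\widetilde z)u$ is in hand.
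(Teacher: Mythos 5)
Your proof is correct in substance and reaches the theorem by a genuinely different route from the paper's. The paper works only with the derived word at the one-letter prefix $0$: it shows the reading morphism there is in $\cP$ outright (first returns to a single letter cannot overlap it, so each is $0v'$ with $v'$ a palindrome), proves that the derived word is again rich, primitive morphic and begins in $0$, iterates this construction to get a sequence $S_n(y)$ of iterated derived words with connecting morphisms $g_n\in\cP$, applies the pigeonhole to that sequence, and then needs Proposition~\ref{comp} (closure of $\cP'$ under composition) to assemble both the fixed-point substitution $h=g_{m+1}\cdots g_n$ and the outer morphism $g=g_m\cdots g_1$. You instead stay inside $y$ itself: you manufacture infinitely many palindromic prefixes $C_i$ by taking complete first returns at the origin, observe that each reading morphism $f_{C_i}$ lies in $\cPret$ with marker $C_i$ (this, rather than $\cP$, is forced once the prefix is longer than one letter, since a return to a long palindrome may overlap it), pigeonhole directly on $\{\D_{C_i}(y)\}$ via Theorem~\ref{durand}, and verify that the single connecting morphism $\lambda$ is itself in $\cPret$ with the palindromic marker $P$ determined by $C_k=f_{C_j}(P)C_j$. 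Your lifting identity $\widetilde{f_u(z)u}=f_u(\widetilde z)u$ is essentially item 3 of Remark 5.2 in \cite{BPS}, which the paper also invokes. What your route buys is that neither Proposition~\ref{comp} nor the lemmas on iterated derived words are needed; the price is that everything rests on the $\cPret$ calculus and Proposition~\ref{subset}. Your counting step $|\lambda(b)P|_P=2$ is sound because occurrences of $P$ in the factor $\lambda(b)P$ of $x$ correspond bijectively to occurrences of $C_k=f_{C_j}(P)C_j$ in the complete first return $f_{C_k}(b)C_k$.

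The one place where you should be more careful is the primitivity of $\lambda$. For an arbitrary pair $j<k$ with $\D_{C_j}(y)=\D_{C_k}(y)$, the return morphism $\lambda$ need not be primitive (it could, for instance, fix a letter). The repair is the one implicit in the paper's own appeal to the proof of Proposition 3.3 of \cite{Du}: since infinitely many indices $i$ yield the same derived word, you may choose $j<k$ with $|C_k|/|C_j|$ as large as you like, and linear recurrence of $y$ (automatic for primitive morphic words) then guarantees that every return word to $C_j$ occurs in every return word to $C_k$, which gives primitivity of $\lambda$ directly. With that adjustment --- and noting that your reduction to the aperiodic case is unnecessary, since nothing in your argument uses aperiodicity --- the proof is complete.
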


\begin{proof} We can suppose without loss of generality that $A$ contains the symbol $0,$
and that $y$ begins in $0.$  Let $\R_0(y)$ denote the set of all first returns to $0$ in $y,$
$A_0(y)=\{0,1,\ldots ,\#\R_0(y)-1\},$ and
$f_0: A_0(y)\rightarrow R_0(y)$ be the unique order preserving bijection. Let
$f:A_0(y)^*\rightarrow A^*$ be the morphism defined by $f(a)=f_0(a)\in \R_0(y)\subset
A^+$ for each $a\in A_0(y).$ Writing $y=u_1u_2u_3\cdots $ with each $u_i\in \R_0(y),$ let
$\D_0(y)=f_0^{-1}(u_1)f_0^{-1}(u_2)f_0^{-1}(u_3)\cdots  $ denote the derived word of $y$
at the prefix $0.$ Thus $f(\D_0(y))=y.$

\noindent The next three lemmas are stated in terms of the prefix $0$ of $y$ since they are
needed only in this special case. But in fact they  hold for all palindromic prefixes $u$ of
$y.$

\begin{lemma}\label{morph} The morphism $f:A_0(y)^*\rightarrow A^*$  is in $\cP$
and thus in $\cP'.$
\end{lemma}
\begin{proof} Since $y$ is rich, by Theorem~\ref{EJC} it follows that for each $v\in \R_0(y)$
there is a palindrome $v'\in A^*$ (possibly empty) such that $v=0v'.$ Thus, for each $a\in A, $
there exists a palindrome $v_a$ such that
$f(a)=f_0(a)=0v_a.$ Hence $f\in \cP\subset \cP'.$
\end{proof}

\begin{lemma} $\D_0(y)\in A_0(y)^\omega$ is rich and begins in $0.$
\end{lemma}

\begin{proof} Since $f_0$ is order preserving, $f_0^{-1}(u_1)=0,$ whence $\D_0(y)$
begins in $0.$ Let $z$ be a complete first return in $\D_0(y)$ to a palindrome
$u\in \Ff^+(\D_0(y)).$ By Lemma~\ref{morph} we deduce that $f(u)0$ is a palindromic
factor of $y$ and $f(z)0$ is a complete first return in $y$ to $f(u)0.$ Since $y$ is
rich it follows from Theorem~\ref{EJC} that $f(z)0$ is a palindrome. By Lemma~\ref{morph}
and item 3. in Remark 5.2 in \cite{BPS} we deduce that $z$ is a palindrome, and hence
$\D_0(y)$ is rich by Theorem~\ref{EJC}.
\end{proof}

\begin{lemma} $\D_0(y)$ is primitive morphic.
\end{lemma}

\begin{proof} In item 5.\ of Proposition 2.6 of \cite{Du}, it is shown that every derived
word of $\D_0(y)$ is also a derived word of $y.$ Since $y$ is primitive morphic,
it follows from Theorem~\ref{durand} that $y$ has  only finitely many distinct derived words,
and hence $\D_0(y)$ has finitely many distinct derived words, and hence by Theorem~\ref{durand}
$\D_0(y)$ is primitive morphic.

\end{proof}

Combining the three previous lemmas we deduce that if $y\in A^\omega$ is a rich primitive
morphic word beginning in $0,$ then $\D_0(y)\in A_0(y)^\omega$ is a rich primitive
morphic word beginning in $0,$ and if $y=u_1u_2u_3\cdots $ with $u_i\in \R_0(y),$ then
$\D_0(y)=f^{-1}(u_1)f^{-1}(u_2)f^{-1}(u_3)\cdots  $ where $f: A_0(y)^*\rightarrow A^*$
belongs to $\cP'.$

Thus, we can inductively define a sequence of infinite words $(S_n(y))_{n\geq 0}$ with
values in finite sets $(\A_n)_{n\geq 0}$  by $S_0(y)=y,$ and $\A_0=A,$ and for $n\geq 0:$
$S_{n+1}(y)=\D_0(S_n(y))$ and $\A_n=A_0(S_n(y)).$ Moreover, for each $n\geq 1$ there
exists a morphism $g_n: \A_{n}^* \rightarrow \A_{n-1}^*$ in $\cP'$ such that writing
$S_{n-1}(y)=u_1u_2u_3\cdots $ with $u_i\in \R_0(S_{n-1}(y)),$ we have
$S_{n}(y)=g_n^{-1}(u_1)g_n^{-1}(u_2)g_n^{-1}(u_3)\cdots  .$ In other words, the sequence
$(S_n(y))_{n\geq 0}$ is just the sequence of iterated derived words of $y$ corresponding
each time to the prefix $0.$

By Theorem~\ref{durand} there exist $0\leq m<n$ such that $S_m(y)=S_n(y).$ Let
$x=S_m(y).$ Let $h=g_{m+1}g_m \cdots g_n.$ Then $h(x)=x$ and by the proof of
Proposition 3.3 in \cite{Du} we deduce  that $h$ is a primitive substitution. By
Proposition~\ref{comp} the morphism $h$ is in  $\cP'.$  Thus $x\in \FP.$ Finally let
$g=g_mg_{m-1} \cdots g_1.$ Then $y=g(x)$ and by Proposition~\ref{comp}  we deduce
that $g\in \cP'$ as required. This completes the proof of Theorem~\ref{rich}.
\end{proof}

\begin{corollary}Let $z\in A^\omega$ be a primitive morphic word having finite defect.
Then there exists a morphism $g\in \cP'$ and a rich word $x\in \FP$ such that $z=g(x).$
\end{corollary}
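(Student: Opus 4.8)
The plan is to deduce the corollary from Theorem~\ref{rich} by stripping off a single $\cP'$ morphism that exposes a rich word underneath, exactly as the rich case was reduced to itself by one step of derivation. The bridge is the result of Balkov\'a et al.\ in \cite{BPS} linking finite defect to richness: since $z$ has finite defect, only finitely many palindromic factors can have a non-palindromic complete first return, so there is a palindromic prefix $u$ of $z$ long enough to ``absorb'' the entire defect, in the sense that every complete first return to $u$ in $z$ is a palindrome and the derived word $w:=\D_u(z)$ is rich. I would quote this from \cite{BPS} rather than reprove it.

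Granting such a $u$, the first step is to identify the associated first-return morphism $f:A_u(z)^*\to A^*$ (sending each symbol to its first return to $u$, so that $z=f(w)$) and to place it in $\cP'$. Since every complete first return to the palindrome $u$ is a palindrome, the reasoning of Lemma~\ref{morph} carries over to show that each $f(a)$ has the form $uv_a$ with $v_a$ a palindrome, so that $f$ is of class $\cP$ with marker $u$; in the language of \cite{BPS} this says $f\in\cPret\subset\cP'$ by Proposition~\ref{subset}. The second step is to check that $w$ is a rich primitive morphic word: richness is the content of the quoted \cite{BPS} result, while primitivity follows verbatim from the corresponding lemma in the proof of Theorem~\ref{rich}, using item~5 of Proposition~2.6 in \cite{Du} together with Durand's characterization (Theorem~\ref{durand}) to pass the finiteness of the set of derived words from $z$ down to $w$.

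The corollary is then immediate by composition. Applying Theorem~\ref{rich} to the rich primitive morphic word $w$ yields a morphism $g'\in\cP'$ and a rich word $x\in\FP$ with $w=g'(x)$; setting $g:=fg'$ gives $z=f(w)=g(x)$ with $g\in\cP'$ by Proposition~\ref{comp}. I expect the only real obstacle to lie in the very first step, namely in correctly invoking \cite{BPS}: unlike the rich case, where Theorem~\ref{EJC} guarantees palindromic complete returns to \emph{every} palindrome (in particular to the single letter $0$) for free, here one must use the finite-defect hypothesis to produce a genuine palindromic \emph{prefix} $u$ for which the complete returns are palindromic and $\D_u(z)$ is rich simultaneously. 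Verifying that such a prefix exists, and that the finite palindromic defect is entirely accounted for below its length, is the crux that \cite{BPS} is needed to settle.
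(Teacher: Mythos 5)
Your proposal follows the paper's proof essentially verbatim: the paper likewise invokes Theorem 5.5 of \cite{BPS} to write $z=f(y)$ with $f\in\cPret$ and $y$ a rich derived word of $z$, passes primitivity to $y$ via Theorem~\ref{durand}, applies Theorem~\ref{rich} to $y$, and composes using Propositions~\ref{comp} and~\ref{subset}. One small caveat: your intermediate claim that each $f(a)$ has the form $uv_a$ with $v_a$ a palindrome (i.e.\ that $f\in\cP$ with marker $u$) can fail when occurrences of the palindromic prefix $u$ overlap in its complete first returns---this is exactly why \cite{BPS} introduces the class $\cPret$---but since you ultimately route the argument through $\cPret\subset\cP'$ (Proposition~\ref{subset}), the conclusion is unaffected.
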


\begin{proof} In Theorem 5.5 in \cite{BPS},  the authors show that if $z\in A^\omega$ is
a  uniformly recurrent word of finite defect, then there exists a rich word $y\in B^\omega$
and a morphism $f:B^*\rightarrow A^*$ in $\cPret$ such that $z=f(y).$ In the proof of the
theorem, it is revealed that $y$ is actually a derived word of $z.$
Thus, if $z$ is primitive morphic, then by Theorem~\ref{durand} so is $y,$ and hence by
Theorem~\ref{rich}  there exists a rich word $x\in \FP$ and a morphism $h\in \cP'$ such
that $y=h(x).$ Let $g=fh.$ Then $z=g(x)$ and by  Proposition~\ref{comp} and
Proposition~\ref{subset} we deduce that $g\in \cP'.$
\end{proof}

We end with an illustration applied to Labb\'e's example. Let
\[y=acabacacabacabacabacacabac\cdots\] be the fixed point of the morphism
 $a\mapsto ac, b\mapsto acab, c\mapsto ab.$ Then $\R_a(y)=\{ac,ab\}$ and the derived word
 $\D_a(y)\in \{0,1\}^\omega$ is the fixed point of the morphism $0\mapsto 01, 1\mapsto 001$
 which is clearly in $\cP'.$ Thus, setting $x=\D_a(y),$ we have that $x\in \FP$ and $y=f(x)$
 where $f: 0\mapsto ac, 1\mapsto ab$ is in $\cP'.$

\section*{References}

\end{document}